\journal{Linear Algebra and its Applications}
\newcommand{\bbC}{\mathbb{C}}
\newcommand{\bbH}{\mathbb{H}}
\newcommand{\bbR}{\mathbb{R}}
\newcommand{\bbZ}{\mathbb{Z}}
\newcommand{\calB}{\mathcal{B}}
\newcommand{\calK}{\mathcal{K}}
\newcommand{\rme}{\mathrm{e}}
\newcommand{\rmi}{\mathrm{i}}
\newcommand{\rmI}{\mathrm{I}}
\newcommand{\rmO}{\mathrm{O}}
\newcommand{\rmT}{\mathrm{T}}
\newcommand{\Tr}{\mathrm{Tr}}
\newcommand{\abs}[1]{|{#1}|}
\newcommand{\set}[1]{\{{#1}\}}
\newcommand{\norm}[1]{\|{#1}\|}
\newcommand{\ip}[2]{\langle{#1},{#2}\rangle}
\newtheorem{thm}{Theorem}
\newproof{pf}{Proof}
\theoremstyle{definition}
\begin{document}
\begin{frontmatter}
\title{Steiner equiangular tight frames}

\author[AFIT]{Matthew Fickus}
\ead{Matthew.Fickus@afit.edu}
\author[Princeton]{Dustin G.~Mixon}
\author[MO]{Janet C.~Tremain}

\address[AFIT]{Department of Mathematics and Statistics, Air Force Institute of Technology\\Wright-Patterson Air Force Base, Ohio 45433, USA}
\address[Princeton]{Program in Applied and Computational Mathematics, Princeton University\\Princeton, New Jersey 08544, USA}
\address[MO]{Department of Mathematics, University of Missouri, Columbia, Missouri 65211, USA}

\begin{abstract}
We provide a new method for constructing equiangular tight frames (ETFs).  The construction is valid in both the real and complex settings, and shows that many of the few previously-known examples of ETFs are but the first representatives of infinite families of such frames.  It provides great freedom in terms of the frame's size and redundancy.  This method also explicitly constructs the frame vectors in their native domain, as opposed to implicitly defining them via their Gram matrix. Moreover, in this domain, the frame vectors are very sparse.  The construction is extremely simple: a tensor-like combination of a Steiner system and a regular simplex.  This simplicity permits us to resolve an open question regarding ETFs and the restricted isometry property (RIP): we show that the RIP behavior of some ETFs is unfortunately no better than their coherence indicates. 
\end{abstract}

\begin{keyword}
Steiner \sep equiangular \sep tight \sep frames \sep restricted isometry
\end{keyword}
\end{frontmatter}


\section{Introduction}
Let $F=\set{f_n}_{n=1}^{N}$ be a finite sequence of vectors in a real or complex $M$-dimensional Hilbert space $\bbH_M$.  The corresponding \textit{frame operator} is $FF^*=\sum_{n=1}^{N}f_nf_n^*$, where $f_n^*$ denotes the linear functional that maps a given $f\in\bbH_M$ to the scalar $\ip{f}{f_n}$.  The sequence $F$ is said to be a \textit{tight frame} if there exists $A>0$ such that $FF^*=A\rmI$.  Meanwhile, $F$ is \textit{equiangular} if $\norm{f_n}=1$ for all $n$ and if there exists $\alpha\geq0$ such that $\abs{\ip{f_n}{f_{n'}}}=\alpha$ for all $n\neq n'$.  This paper concerns \textit{equiangular tight frames} (ETFs); writing $F$ as an $M\times N$ matrix, we need the rows of $F$ to be orthogonal and have constant norm, the columns of $F$ to be unit norm, and the inner products of distinct columns of $F$ to have constant modulus.  As detailed below, such frames are useful in applications, but up to this point, they have proven notoriously difficult to construct. 

In this article, we provide a new method for constructing ETFs.  The construction is valid in both the real and complex settings, and shows that many of the few previously-known examples of ETFs are but the first representatives of infinite families of such frames.  This construction technique also permits great freedom in selecting $M$ and $N$, just shy of letting one choose the exact size and redundancy of their liking.  This method also explicitly constructs the frame vectors in their native domain $\bbH_M$, as opposed to the usual method of implicitly defining them with their \textit{Gram matrix} $F^*F$.  Moreover, in this domain, the frame vectors can be chosen to be very sparse.  The construction is extremely simple: a tensor-like combination of a Steiner system and a regular simplex.  This simplicity permits us to resolve an open question regarding ETFs and the restricted isometry property (RIP): we show that the RIP behavior of some ETFs is unfortunately no better than their worst-case-coherence bounds indicate. 

Equiangular lines have long been a subject of interest~\cite{LemmensS73}.  Recent work on the matter of ETFs was spurred on by communications-theory-inspired results~\cite{BodmannP05,HolmesP04,StrohmerH03} that show that the linear encoders provided by such frames are optimally robust against channel erasures.  In the real setting, the existence of an ETF of a given size is equivalent to the existence of a strongly regular graph with certain corresponding parameters~\cite{HolmesP04,Seidel73}.  Such graphs have a rich history and remain an active topic of research~\cite{Brouwer07}; the specific ETFs that arise from particular graphs are detailed in~\cite{Waldron09}.  Some of this theory generalizes to the complex-variable setting in the guise of complex Seidel matrices~\cite{BodmannE10,BodmannPT09,DuncanHS10}.  Many approaches to constructing ETFs have focused on the special case in which every entry of $F$ is a root of unity~\cite{Kalra06,Renes07,Strohmer08,SustikTDH07,XiaZG05}.  Other approaches are given in~\cite{CasazzaRT08,Singh10,TroppDHS05}.  In the complex setting, much attention has focused on the \textit{maximal} case of $M^2$ vectors in $\bbH_M$~\cite{Appleby05,Fickus09,Khatirinejad08,RenesBSC04,ScottG10}.

A version of the ETF construction method we present here was previously employed by Seidel in Theorem~12.1 of~\cite{Seidel73} to prove the existence of certain strongly regular graphs.  In the context of that result, our contributions are: (i) the realization that when Seidel's block design arises from a particular type of Steiner system, the resulting strongly regular graph indeed corresponds to a real ETF; (ii) noting that in this case, the graph theory may be completely bypassed, as the idea itself directly produces the requisite frame $F$; and (iii) having bypassed the graph theory, realizing that this construction immediately generalizes to the complex-variable setting if Seidel's requisite Hadamard matrix is permitted to become complex.  These realizations permit us to exploit the vast literature on Steiner systems~\cite{ColbournM07} to construct several new infinite families of ETFs, in both the real and complex settings.  Moreover, these ETFs are extremely sparse in their native space; sparse tight frames have recently become a subject of interest in their own right~\cite{CasazzaHKK10}.

In fact, these ETFs are simple enough so as to permit a rigorous investigation of their potential as RIP matrices, which are currently in demand due to their applicability in compressed sensing~\cite{CandesT05,CandesT06}.  As discussed below, ETFs are the optimal matrices with respect to a very coarse estimate---worst-case coherence---on a matrix's RIP bounds.  Our hope was that all ETFs, having such high degrees of symmetry, might possess other hidden properties that, when properly exploited, yield even better bounds than those given by coherence-based estimates.  Unfortunately, our newly-discovered ETF constructions dash these hopes: for at least some ETFs, worst-case coherence, \`{a} la Gershgorin circles, does indeed provide a very good estimate on RIP bounds.  With respect to RIP, these ETFs perform no better than a myriad of previously discovered deterministic constructions of RIP matrices, such as those given in~\cite{DeVore07}.

In the next section, we provide our main result, namely Theorem~\ref{theorem.steiner etfs}, which shows how certain Steiner systems may be combined with regular simplices to produce ETFs.  In the third section, we discuss each of the known infinite families of such Steiner systems, and compute the corresponding infinite families of ETFs they generate.  We further provide some necessary and asymptotically sufficient conditions, namely Theorem~\ref{theorem.necessary conditions}, to aid in the quest for discovering other examples of such frames that lie outside of the known infinite families.   In Section~$4$, we discuss the possible RIP behavior of ETFs in general, and show that the performance of our Steiner ETFs is indeed no better than that guaranteed by coherence-based estimates.

\section{Steiner equiangular tight frames}

In this section, we provide new constructions of infinite families of ETFs, namely $M\times N$ matrices $F=[f_1 \dotsc f_N]$ which have orthogonal rows of constant squared-norm $A$ and unit norm columns whose inner products have constant modulus $\alpha$: we want $FF^*=A\rmI$ while the diagonal entries of $F^*F$ are $1$ and the off-diagonal entries are $\alpha$ in modulus.  For a fixed $M$ and $N$, there is no ambiguity~\cite{StrohmerH03} as to the values of $A$ and $\alpha$.  Indeed, noting that since $N=\sum_{n=1}^{N}\norm{f_n}^2=\Tr(F^*F)=\Tr(FF^*)=MA$, we have $A=\frac NM$; moreover, since
\begin{equation*}
N+N(N-1)\alpha^2=\sum_{n,n'=1}^{N}\abs{\ip{f_n}{f_{n'}}}^2=\Tr[(F^*F)^2]=\Tr[(FF^*)^2]=MA^2=\tfrac{N^2}{M},
\end{equation*}
then $\smash{\alpha^2=\frac{N-M}{M(N-1)}}$.  Conversely, if one can design an $N\times N$ self-adjoint, positive semidefinite Gram matrix $G$ of rank $M$ whose diagonal entries are one and whose off-diagonal entries are all $\smash{\frac{N-M}{M(N-1)}}$ in squared-modulus, then one can then factor $G$ as $F^*F$, where $F$ is an $M\times N$ ETF~\cite{StrohmerH03}.  This fact has led many to attempt to construct ETFs, not by constructing $F$ directly, but rather, by constructing $G$.  This Gram representation of an ETF has the additional benefit of being invariant with respect to rotations of the frame elements themselves, and, in the real-variable case, is closely-related to the incidence matrix of the corresponding strongly regular graph~\cite{HolmesP04}.  Moreover, whenever $G=F^*F$ and $FF^*=A\rmI$, then the columns of $G$ are, in fact, a scalar multiple of an isometric embedding of the columns of $F$.  That is, the columns of the Gram matrix of a tight frame are but a high-dimensional representation of the frame elements themselves.

There is a drawback, however, to working with Gram representations: one does not produce the frame vectors in their native $M$-dimensional space, the domain in which they are usually needed for communications applications.  And though factoring $G$ is straightforward---one may, for instance, apply the Gram-Schmidt algorithm to the columns of $G$---it will produce the $f_n$'s with respect to some arbitrarily chosen basis for $\bbH_M$, one that may not be optimal.  Indeed, such a process ignores the question of whether or not there is a basis for $\bbH_M$ that makes the frame elements sparse.  For this reason, in this paper, we avoid the Gram representation and construct $F$ directly.  The key idea is to design the ETFs in blocks, specifically those arising from a particular type of combinatorial block design.

Steiner systems and block designs have been studied for over a century; the background facts presented here on these topics are taken from~\cite{AbelG07,ColbournM07}.  In short, a $(v,b,r,k,\lambda)$-\textit{block design} is a $v$-element set $V$ along with a collection $\calB$ of $b$ $k$-element subsets of $V$, dubbed \textit{blocks}, that have the property that any element of $V$ lies in exactly $r$ blocks and that any $2$-element subset of $V$ is contained in exactly $\lambda$ blocks.  The corresponding \textit{incidence matrix} is a $v\times b$ matrix $A$ that has a one in a given entry if that block contains the corresponding point, and is otherwise zero; in this paper, it is more convenient for us to work with the $b\times v$ transpose $A^\rmT$ of this incidence matrix.  Our particular construction of ETFs involves a special class of block designs known as $(2,k,v)$-\textit{Steiner systems}.  These have the property that any $2$-element subset of $V$ is contained in exactly one block, that is, $\lambda=1$.  With respect to our purposes, the crucial facts are the following:
\begin{quote}
The transpose $A^\rmT$ of the $\set{0,1}$-incidence matrix $A$ of a $(2,k,v)$-Steiner system:
\begin{enumerate}
\item[(i)]
is of size $\smash{\frac{v(v-1)}{k(k-1)}}\times v$,
\item[(ii)]
has $k$ ones in each row,
\item[(iii)]
has $\smash{\frac{v-1}{k-1}}$ ones in each column, and
\item[(iv)]
has the property that any two of its columns have a dot product of one.
\end{enumerate} 
\end{quote}
The first three facts follow immediately from solving for $\smash{b=\frac{v(v-1)}{k(k-1)}}$ and $\smash{r=\frac{v-1}{k-1}}$, using the well-known relations $vr=bk$ and $r(k-1)=\lambda(v-1)$.  Meanwhile, (iv) comes from the fact that $\lambda=1$: each column of $A^\rmT$ corresponds to an element of the set, and the dot product of any two columns computes the number of blocks that contains the corresponding pair of points.  This in hand, we present our main result;  here, the \textit{density} of a matrix is the ratio of the number of nonzero entries of that matrix to the entire number of its entries:
\begin{thm}
\label{theorem.steiner etfs}
Every $(2,k,v)$-Steiner system generates an equiangular tight frame consisting of $N=v(1+\frac{v-1}{k-1})$ vectors in $M=\frac{v(v-1)}{k(k-1)}$-dimensional space with redundancy $\smash{\frac NM=k(1+\frac{k-1}{v-1})}$ and density $\smash{\frac{k}{v}=(\frac{N-1}{M(N-M)})^{\frac12}}$.\medskip

\noindent
Moreover, if there exists a real Hadamard matrix of size $\smash{1+\frac{v-1}{k-1}}$, then such frames are real.\medskip

\noindent
Specifically, a $\frac{v(v-1)}{k(k-1)}\times v(1+\frac{v-1}{k-1})$ ETF matrix $F$ may be constructed as follows:
\begin{enumerate}
\item
Let $A^\rmT$ be the $\smash{\frac{v(v-1)}{k(k-1)}}\times v$ transpose of the adjacency matrix of a $(2,k,v)$-Steiner system.\smallskip
\item
For each $j=1,\dotsc,v$, let $H_j$ be any $(1+\frac{v-1}{k-1})\times(1+\frac{v-1}{k-1})$ matrix that has orthogonal rows and unimodular entries, such as a possibly complex Hadamard matrix.\smallskip
\item
For each $j=1,\dotsc,v$, let $F_j$ be the $\smash{\frac{v(v-1)}{k(k-1)}\times(1+\frac{v-1}{k-1})}$ matrix obtained from the $j$th column of $A^\rmT$ by replacing each of the one-valued entries with a distinct row of $H_j$, and every zero-valued entry with a row of zeros.\smallskip
\item
Concatenate and rescale the $F_j$'s to form  $F=(\frac{k-1}{v-1})^\frac12[F_1 \cdots F_v]$.
\end{enumerate}
\end{thm}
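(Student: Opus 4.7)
My plan is to verify directly the three defining properties of an ETF for the matrix $F$ produced by the construction: unit-norm columns, constant off-diagonal modulus of $F^*F$, and the tight frame identity $FF^*=\frac{N}{M}\rmI$. I will index the $M$ rows of $F$ by the blocks of the Steiner system and the $N$ columns by pairs $(j,\ell)$ where $j\in\set{1,\dotsc,v}$ labels the subblock $F_j$ and $\ell$ labels a column of $H_j$; then $F_{i,(j,\ell)}=(\frac{k-1}{v-1})^{1/2}(H_j)_{m,\ell}$ when block $i$ contains point $j$ (with $m$ the row of $H_j$ assigned to that incidence) and is zero otherwise. The structural facts I will repeatedly use are: (a) any two distinct points lie in exactly one common block, equivalently any two distinct blocks share at most one point; (b) each point lies in $\frac{v-1}{k-1}$ blocks and each block contains $k$ points; and (c) because $H_j$ has orthogonal rows and unimodular entries it is proportional to a unitary, so its columns are also orthogonal, each of squared-norm $1+\frac{v-1}{k-1}$.

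Unit norms are immediate: each column contains $\frac{v-1}{k-1}$ unimodular nonzero entries, giving pre-rescaling squared-norm $\frac{v-1}{k-1}$, which the factor $(\frac{k-1}{v-1})^{1/2}$ normalizes to $1$. For equiangularity I compute $\ip{f_{(j,\ell)}}{f_{(j',\ell')}}$ in two cases. When $j\neq j'$, only the unique block containing both $j$ and $j'$ contributes to the sum, yielding a single product of two unimodular numbers; after rescaling this has modulus $\frac{k-1}{v-1}$. When $j=j'$ and $\ell\neq\ell'$, the inner product is a partial dot product of columns $\ell$ and $\ell'$ of $H_j$ over only the $\frac{v-1}{k-1}$ ``used'' rows; since the full column dot product vanishes by (c), this partial sum equals the negative of the single unused term, which is again unimodular and rescales to $\frac{k-1}{v-1}$. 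A short algebraic check in $v$ and $k$ confirms that this common modulus coincides with the forced value $\alpha=\bigparen{\frac{N-M}{M(N-1)}}^{1/2}$ dictated by the parameters $M$ and $N$.

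For tightness I compute $(FF^*)_{ii'}=\frac{k-1}{v-1}\sum_{j=1}^{v}(F_jF_j^*)_{ii'}$. On the diagonal, by (b) block $i$ contains $k$ points, and each contributing $F_j$ places a row of $H_j$ in row $i$ of squared-norm $1+\frac{v-1}{k-1}$, so $(FF^*)_{ii}=\frac{k-1}{v-1}\cdot k\bigparen{1+\frac{v-1}{k-1}}=\frac{N}{M}$. Off the diagonal, two distinct blocks $i\neq i'$ either share no point, in which case every $j$ forces at least one of rows $i$ and $i'$ of $F_j$ to be zero, or share exactly one point $j_0$, in which case only $j=j_0$ contributes; but then rows $i$ and $i'$ of $F_{j_0}$ are two distinct rows of $H_{j_0}$, whose inner product vanishes by row-orthogonality. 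Thus $FF^*=\frac{N}{M}\rmI$.

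The redundancy and density formulas reduce to elementary arithmetic with $M$, $N$, and the per-column nonzero count $\frac{v-1}{k-1}$. The main subtlety I foresee is the $j=j'$ subcase of equiangularity, where one must notice that the full column-orthogonality of $H_j$ forces the partial dot product over the used rows to equal, up to sign and conjugation, the single unimodular entry left out, so its modulus is $1$ regardless of which row is omitted; every other step follows cleanly from the Steiner-system incidence properties together with the scaled-unitarity of each $H_j$.
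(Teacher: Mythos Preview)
Your proposal is correct and follows essentially the same approach as the paper: you verify unit-norm columns, equiangularity via the two cases (distinct $j$'s use the single common block, same $j$ uses the ``missing-row'' trick from column-orthogonality of $H_j$), and tightness via row-orthogonality of each $H_j$. The only cosmetic difference is that for the off-diagonal entries of $FF^*$ you invoke the fact that two distinct blocks share at most one point, whereas the paper simply observes that every $j$-summand vanishes (being either an inner product of distinct rows of $H_j$ or involving a zero row), which avoids that case split; either way the argument is the same in substance.
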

We refer to the ETFs produced by Theorem~\ref{theorem.steiner etfs} as \textit{$(2,k,v)$-Steiner ETFs}.  In essence, the idea of the construction is to realize that the nonzero rows of any particular $F_j$ form a regular simplex in $\smash{\frac{v-1}{k-1}}$-dimensional space; these vectors are automatically equiangular amongst themselves; by requiring the entries of these simplices to be unimodular, and requiring that distinct blocks have only one entry of mutual support, one can further control the inner products of vectors arising from distinct blocks.  This idea is best understood by considering a simple example, such as the ETF that arises from a $(2,2,4)$-Steiner system whose transposed incidence matrix is:
\begin{equation*}
A^\rmT=\begin{bmatrix}+&+&&\\+&&+&\\+&&&+\\&+&+&\\&+&&+\\&&+&+\end{bmatrix}.
\end{equation*}
One can immediately verify that $A^\rmT$ corresponds to a block design: there is a set $V$ of $v=4$ elements, each corresponding to a column of $A^\rmT$; there is also a collection $\calB$ of $b=6$ subsets of $V$, each corresponding to a row of $A^\rmT$; every row contains $k=2$ elements; every column contains $r=3$ elements; any given pair of elements is contained in exactly one row, that is, $\lambda=1$, a fact which is equivalent to having the dot product of any two distinct columns of $A^\rmT$ being one.  To form an ETF, for each of the four columns of $A^\rmT$ we must choose a $4\times 4$ matrix $H$ with unimodular entries and orthogonal rows; the size of $H$ is always one more than the number $r$ of ones in a given column of $A^\rmT$.  Though in principle one may choose a different $H$ for each column, we choose them all to be the same, namely the Hadamard matrix:
\begin{equation*}
H=\begin{bmatrix}+&+&+&+\\+&-&+&-\\+&+&-&-\\+&-&-&+\end{bmatrix}.
\end{equation*}
To form the ETF, for each column of $A^\rmT$ we replace each of its $1$-valued entries with a distinct row of $H$.  Again, though in principle one may choose a different sequence of rows of $H$ for each column, we simply decide to use the second, third and fourth rows, in that order.  The result is a real ETF of $N=16$ elements of dimension $M=6$:
\begin{equation*}
F=\frac1{\sqrt{3}}\left[\begin{array}{cccccccccccccccc}+&-&+&-&+&-&+&-\\+&+&-&-&&&&&+&-&+&-\\+&-&-&+&&&&&&&&&+&-&+&-\\&&&&+&+&-&-&+&+&-&-\\&&&&+&-&-&+&&&&&+&+&-&-\\&&&&&&&&+&-&-&+&+&-&-&+  \end{array}\right].
\end{equation*}
One can immediately verify that the rows of $F$ are orthogonal and have constant norm, implying $F$ is indeed a tight frame.  One can also easily see that the inner products of two columns from the same block are $-\frac13$, while the inner products of columns from distinct blocks are $\pm\frac13$.  Theorem~\ref{theorem.steiner etfs} states that this behavior holds in general for any appropriate choice of $A^\rmT$ and $H$; its formal proof is as follows.
\begin{proof}
To verify $F$ is a tight frame, note that the inner product of any two distinct rows of $F$ is zero, as they are the sum of the inner products of the corresponding rows of the $F_j$'s over all $j=1,\dotsc,v$; for any $j$, these shorter inner products are necessarily zero, as they either correspond to inner products of distinct rows of $H_j$ or to inner products with zero vectors.  Moreover, the rows of $F$ have constant norm: as noted in (ii) above, each row of $A^\rmT$ contains $k$ ones; since each $H_j$ has unimodular entries, the squared-norm of any row of $F$ is the squared-scaling factor $\frac{k-1}{v-1}$ times a sum of $\smash{k(1+\frac{v-1}{k-1})}$ ones, which, as is necessary for any unit norm tight frame, equals the redundancy $\smash{\frac NM=k(1+\frac{k-1}{v-1})}$.

Having that $F$ is tight, we show $F$ is also equiangular.  We first note that the columns of $F$ have unit norm: the squared-norm of any column of $F$ is $\smash{\frac{k-1}{v-1}}$ times the squared-norm of a column of one of the $F_j$'s; since the entries of $H_j$ are unimodular and (iii) above gives that each column of $A^\rmT$ contains $\smash{\frac{v-1}{k-1}}$ ones, the squared-norm of any column of $F$ is $\smash{(\frac{k-1}{v-1})(\frac{v-1}{k-1})1=1}$, as claimed.  Moreover, the inner products of any two distinct columns of $F$ has constant modulus.  Indeed, the fact (iv) that any two distinct columns of $A^\rmT$ have but a single entry of mutual support implies the same is true for columns of $F$ that arise from distinct $F_j$ blocks, implying the inner product of such columns is $\smash{\frac{k-1}{v-1}}$ times the product of two unimodular numbers.  That is, the squared-magnitude of the inner products of two columns that arise from distinct blocks is $\smash{\frac{N-M}{M(N-1)}=(\frac{k-1}{v-1})^2}$, as needed.  Meanwhile, the same holds true for columns that arise from the same block $F_j$.  To see this, note that since $H_j$ is a scalar multiple of a unitary matrix, its columns are orthogonal.  Moreover, $F_j$ contains all but one of the $H_j$'s rows, namely one for each of the $1$-valued entries of $A^\rmT$, \`{a} la (iii).  Thus, the inner products of the portions of $H_j$ that lie in $F_j$ are their entire inner product of zero, less the contribution from the left-over entries.  Overall, the inner product of two columns of $F$ that arise from the same $F_j$ block is $\smash{\frac{k-1}{v-1}}$ times the negated product of one entry of $H_j$ and the conjugate of another; since $H_j$ is unimodular, we have that the squared-magnitude of such inner products is $\smash{\frac{N-M}{M(N-1)}=(\frac{k-1}{v-1})^2}$, as needed.

Thus $F$ is an ETF.  Moreover, as noted above, its redundancy is $\smash{\frac NM=k(1+\frac{k-1}{v-1})}$.  All that remains to verify is its density: as the entries of each $H_j$ are all nonzero, the proportion of $F$'s nonzero entries is the same as that of the incidence matrix $A$, which is clearly $\frac kv$, having $k$ ones in each $v$-dimensional row.  Moreover, substituting $\smash{N=v(1+\frac{v-1}{k-1})}$ and $\smash{M=\frac{v(v-1)}{k(k-1)}}$ into the quantity $\smash{\frac{N-1}{M(N-M)}}$ reveals it to be $\frac{k^2}{v^2}$, and so the density can be alternatively expressed as $\smash{(\frac{N-1}{M(N-M)})^{\frac12}}$, as claimed.
\end{proof}

In the next section, we apply Theorem~\ref{theorem.steiner etfs} to produce several infinite families of Steiner ETFs.  Before doing so, however, we pause to remark on the redundancy and sparsity of such frames.  In particular, note that since the parameters $k$ and $v$ of the requisite Steiner system always satisfy $2\leq k\leq v$, then the redundancy $k(1+\frac{k-1}{v-1})$ of Steiner ETFs is always between $k$ and $2k$; the redundancy is therefore on the order of $k$, and is always strictly greater than $2$.  If a low-redundancy ETF is desired, one can always take the Naimark complement~\cite{CasazzaFMWZ10} of an ETF of $N$ elements in $M$-dimensional space to produce a new ETF of $N$ elements in $(N-M)$-dimensional space; though the complement process does not preserve sparsity, it nevertheless transforms any Steiner ETF into a new ETF whose redundancy is strictly less than $2$.  However, such a loss of sparsity should not to be taken lightly.  Indeed, the low density of Steiner ETFs gives them a large computational advantage over their non-sparse brethren.  

To clarify, the most common operation in frame-theoretic applications is the evaluation of the \textit{analysis} operator $F^*$ on a given $f\in\bbH_M$.  For a non-sparse $F$, this act of computing $F^*f$ requires $\rmO(MN)$ operations; for a frame $F$ of density $D$, this cost is reduced to $\rmO(DMN)$.  Indeed, using the explicit value of $\smash{D=(\frac{N-1}{M(N-M)})^{\frac12}}$ given in Theorem~\ref{theorem.steiner etfs} as well as the aforementioned fact that the redundancy of such frames necessarily satisfies $\frac NM>2$, we see that the cost of evaluating $F^*f$ when $F$ is a Steiner ETF is on the order of $\smash{(\frac{M(N-1)}{N-M})^{\frac12}N <(2M)^\frac{1}{2}N}$ operations, a dramatic cost savings when $M$ is large.  Further efficiency is gained when $F$ is real, as its nonzero elements are but a fixed scaling factor times the entries of a real Hadamard matrix, implying $F^*f$ can be evaluated using only additions and subtractions.  The fact that every entry of $F$ is either $0$ or $\pm 1$ further makes real Steiner ETFs potentially useful for applications that require binary measurements, such as design of experiments.

\section{Examples of Steiner equiangular tight frames}
\label{section.examples of Steiner ETFs}
In this section, we apply Theorem~\ref{theorem.steiner etfs} to produce several infinite families of Steiner ETFs.   When designing frames for real-world applications, three considerations reign supreme: size, redundancy and sparsity.  As noted above, every Steiner ETF is very sparse, a serious computational advantage in high-dimensional signal processing.  Moreover, some of these infinite families, such as those arising from finite affine and projective geometries, provide one great flexibility in choosing the ETF's size and redundancy.  Indeed, these constructions provide the first known guarantee that for a given application, one is always able to find ETFs whose frame elements lie in a space whose dimension matches, up to an order of magnitude, that of one's desired class of signals, while simultaneously permitting one to have an almost arbitrary fixed level of redundancy, a handy weapon in the fight against noise.  To be clear, recall that the redundancy of a Steiner ETF is always strictly greater than $2$.  Moreover, as general bounds on the maximal number of equiangular lines~\cite{LemmensS73} require that any ETF satisfy $\smash{N\leq\frac{M(M+1)}{2}}$ in real spaces and $N\leq M^2$ in complex ones, the redundancy of an ETF is never truly arbitrary.  Nevertheless, if one does prescribe a given desired level of redundancy in advance, the Steiner method can produce arbitrarily large ETFs whose redundancy is approximately the prime power nearest to the sought-after level.  

\subsection{Infinite families of Steiner equiangular tight frames}
We now detail eight infinite families of ETFs, each generated by applying Theorem~\ref{theorem.steiner etfs} to one of the eight completely understood infinite families of $(2,k,v)$-Steiner systems.  Table~\ref{table.infinite families} summarizes the most important features of each family, while Table~\ref{table.low-dimensional examples} gives the first few examples of each type, summarizing those that lie in 100 dimensions or less.

\subsubsection{All two-element blocks: $(2,2,v)$-Steiner ETFs for any $v\geq2$.}

The first infinite family of Steiner systems is so simple that it is usually not discussed in the design-theory literature.  For any $v\geq 2$, let $V$ be a $v$-element set, and let $\calB$ be the collection of all $2$-element subsets of $V$.  Clearly, we have $\smash{b=\frac{v(v-1)}{2}}$ blocks, each of which contains $k=2$ elements; each point is contained in $r=v-1$ blocks, and each pair of points is indeed contained in but a single block, that is, $\lambda=1$.

By Theorem~\ref{theorem.steiner etfs}, the ETFs arising from these $(2,2,v)$-Steiner systems consist of $N=v(1+\frac{v-1}{k-1})=v^2$ vectors in $\smash{M=\frac{v(v-1)}{k(k-1)}=\frac{v(v-1)}{2}}$-dimensional space.  Though these frames can become arbitrarily large, they do not provide any freedom with respect to redundancy: $\smash{\frac NM=2\frac{v}{v-1}}$ is essentially $2$.  These frames have density $\smash{\frac kv=\frac2v}$.  Moreover, these ETFs can be real-valued if there exists a real Hadamard matrix of size $\smash{1+\frac{v-1}{k-1}}=v$.  In particular, it suffices to have $v$ to be a power of $2$; should the Hadamard conjecture prove true, it would suffice to have $v$ divisible by $4$.

One example of such an ETF with $v=4$ was given in the previous section.  For another, consider $v=3$.  The $b\times v$ transposed incidence matrix $A^\rmT$ is $3\times 3$, with each row corresponding to a given $2$-element subset of $\set{0,1,2}$:
\begin{equation*}
A^\rmT=\begin{bmatrix}+&+&\\+&&+\\&+&+\end{bmatrix}.
\end{equation*}
To form the corresponding $3\times 9$ ETF $F$, we need a $3\times 3$ unimodular matrix with orthogonal rows, such as a DFT; letting $\smash{\omega=\rme^{2\pi\rmi/3}}$, we can take
\begin{equation*}
H=\left[\begin{array}{lll}1&1&1\\1&\omega^2&\omega\\1&\omega&\omega^2\end{array}\right].
\end{equation*}
To form $F$, in each column of $A^\rmT$, we replace each $1$-valued entry with a distinct row of $H$.  Always choosing the second and third rows yields an ETF of $9$ elements in $\bbC^3$:
\begin{equation*}
F=\frac1{\sqrt{2}}\left[\begin{array}{lllllllll}1&\omega^2&\omega&1&\omega^2&\omega&&&\\1&\omega&\omega^2&&&&1&\omega^2&\omega\\&&&1&\omega&\omega^2&1&\omega&\omega^2\end{array}\right].
\end{equation*}
This is the only known instance of when the Steiner-based construction of Theorem~\ref{theorem.steiner etfs} produces a maximal ETF, namely one that has $N=M^2$.

\subsubsection{Steiner triple systems: $(2,3,v)$-Steiner ETFs for any $v\equiv 1,3\mod 6$.}

\textit{Steiner triple systems}, namely $(2,3,v)$-Steiner systems, have been a subject of interest for over a century, and are known to exist precisely when $v\equiv 1,3\mod 6$~\cite{ColbournM07}.  Each of the $\smash{b=\frac{v(v-1)}{6}}$ blocks contains $k=3$ points, while each point is contained in $\smash{r=\frac{v-1}{2}}$ blocks.  The corresponding ETFs produced by Theorem~\ref{theorem.steiner etfs} consist of $\smash{\frac{v(v+1)}{2}}$ vectors in $\smash{\frac{v(v-1)}6}$-dimensional space.  The density of such frames is $\frac3v$.  As with ETFs stemming from $2$-element blocks, Steiner triple systems offer little freedom in terms of redundancy: $\smash{\frac NM=3\frac{v+1}{v-1}}$ is always approximately $3$.  Such ETFs can be real if there exists a real Hadamard matrix of size $\smash{\frac{v+1}{2}}$.

The \textit{Fano plane} is a famous example of such a design.  The simplest example of a finite projective geometry, it consists of $v=7$ points and $b=7$ lines, any two of which intersect in exactly one point.  Each line consists of $k=3$ points, and each point is contained in $r=3$ lines:
\begin{equation*}
A^\rmT=\begin{bmatrix}+&+&+\\+&&&+&+\\+&&&&&+&+\\&+&&+&&+\\&+&&&+&&+\\&&+&+&&&+\\&&+&&+&+\end{bmatrix}.
\end{equation*}
Choosing $H$ to be the standard $4\times 4$ Hadamard matrix used in the previous section results in a real ETF of $28$ elements in $7$-dimensional space; $F$ is the scaling factor $\smash{\frac1{\sqrt{3}}}$ times:
\begin{equation*}
\begin{tiny}
\left[\begin{array}{cccccccccccccccccccccccccccc}
+&-&+&-&+&-&+&-&+&-&+&-& & & & & & & & & & & & & & & & \\
+&+&-&-& & & & & & & & &+&-&+&-&+&-&+&-& & & & & & & & \\
+&-&-&+& & & & & & & & & & & & & & & & &+&-&+&-&+&-&+&-\\
 & & & &+&+&-&-& & & & &+&+&-&-& & & & &+&+&-&-& & & & \\
 & & & &+&-&-&+& & & & & & & & &+&+&-&-& & & & &+&+&-&-\\
 & & & & & & & &+&+&-&-&+&-&-&+& & & & & & & & &+&-&-&+\\
 & & & & & & & &+&-&-&+& & & & &+&-&-&+&+&-&-&+& & & & 
\end{array}\right]
\end{tiny}.
\end{equation*}

\subsubsection{Four element blocks: $(2,4,v)$-Steiner ETFs for any $v\equiv 1,4\mod 12$.}

It is known that $(2,4,v)$-Steiner systems exist precisely when $v\equiv 1,4\mod 12$~\cite{AbelG07}.  Continuing the trend of the previous two families, these ETFs can vary in size but not in redundancy: they consist of $\smash{\frac{v(v+2)}{3}}$ vectors in $\smash{\frac{v(v-1)}{12}}$-dimensional space, having redundancy $\smash{4\frac{v+2}{v-1}}$ and density $\smash{\frac4v}$.  Interestingly, such frames can never be real: with the exception of the trivial $1\times 1$ and $2\times 2$ cases, the dimensions of all real Hadamard matrices are divisible by $4$; since $v\equiv 1,4\mod 12$, the requisite matrices $H$ here are of size $\smash{\frac{v+2}{3}}\equiv1,2\mod4$.

\subsubsection{Five element blocks: $(2,5,v)$-Steiner ETFs for any $v\equiv 1,5\mod 20$.}

It is known that $(2,5,v)$-Steiner systems exist precisely when $v\equiv 1,5\mod 20$~\cite{AbelG07}.  The corresponding ETFs consist of $\smash{\frac{v(v+3)}{4}}$ vectors in $\smash{\frac{v(v-1)}{20}}$-dimensional space, having redundancy $\smash{5\frac{v+3}{v-1}}$ and density $\smash{\frac5v}$.  Such frames can be real whenever there exists a real Hadamard matrix of size $\frac{v+3}4$.  In particular, letting $v=45$, we see that there exists a real Steiner ETF of $540$ vectors in $99$-dimensional space, a fact not obtained from any other known infinite family.

\subsubsection{Affine geometries: $(2,q,q^n)$-Steiner ETFs for any prime power $q$, $n\geq 2$.}

At this point, the constructions depart from those previously considered, allowing both $k$ and $v$ to vary.  In particular, using techniques from finite geometry, one can show that for any prime power $q$ and any $n\geq2$, there exists a $(2,k,v)$-Steiner system with $k=q$ and $v=q^n$~\cite{ColbournM07}.  The corresponding ETFs consist of $\smash{q^n(1+\frac{q^n-1}{q-1})}$ vectors in $\smash{q^{n-1}(\frac{q^n-1}{q-1}})$-dimensional space.  Like the preceding four classes of Steiner ETFs, these frames can grow arbitrarily large: fixing any prime power $q$, one may manipulate $n$ to produce ETFs of varying orders of magnitude.  However, unlike the four preceding classes, these affine Steiner ETFs also provide great flexibility in choosing one's redundancy.  That is, they provide the ability to pick $M$ and $N$ somewhat independently.  Indeed, the redundancy of such frames $q(1+\frac{q-1}{q^n-1})$ is essentially $q$, which may be an arbitrary prime power.  Moreover, as these frames grow large, they also become increasingly sparse: their density is $\smash{\frac1{q^{n-1}}}$.  Because of their high sparsity and flexibility with regards to size and redundancy, these frames, along with their projective geometry-based cousins detailed below, are perhaps the best known candidates for use in ETF-based applications.  Such ETFs can be real if there exists a real Hadamard matrix of size $1+\frac{q^n-1}{q-1}$, such as whenever $q=2$, or when $q=5$ and $n=3$.

\subsubsection{Projective geometries: $(2,q+1,\frac{q^{n+1}-1}{q-1})$-Steiner ETFs for any prime power $q$, $n\geq 2$.}

With finite geometry, one can show that for any prime power $q$ and any $n\geq2$, there exists a $(2,k,v)$-Steiner system with $k=q+1$ and $\smash{v=\frac{q^{n+1}-1}{q-1}}$~\cite{ColbournM07}.  Qualitatively speaking, the ETFs these projective geometries generate share much in common with their affinely-generated cousins, possessing very high sparsity and great flexibility with respect to size and redundancy.  The technical details are as follows: they consist of $\smash{\frac{q^{n+1}-1}{q-1}(1+\frac{q^n-1}{q-1})}$ vectors in $\smash{\frac{(q^n-1)(q^{n+1}-1)}{(q+1)(q-1)^2}}$-dimensional space, with density $\smash{\frac{q^2-1}{q^{n+1}-1}}$ and redundancy $\smash{(q+1)(1+\frac{q-1}{q^n-1})}$.  These frames can be real if there exists a real Hadamard matrix of size $1+\frac{q^n-1}{q-1}$; note this restriction is identical to that for ETFs generated by affine geometries for the same $q$ and $n$, implying that real Steiner ETFs generated by finite geometries always come in pairs, such as the $6\times 16$ and $7\times 28$ ETFs generated when $q=2$, $n=2$, and the $28\times 64$ and $35\times 120$ ETFs generated when $q=2$, $n=3$.

\subsubsection{Unitals: $(2,q+1,q^3+1)$-Steiner ETFs for any prime power $q$.}

For any prime power $q$, one can show that there exists a $(2,k,v)$-Steiner system with $k=q+1$ and $v=q^3+1$~\cite{ColbournM07}.  Though one may pick a redundancy of one's liking, such a choice confines one to ETFs of a given size: they consist of $(q^2+1)(q^3+1)$ vectors in $\smash{\frac{q^2(q^3+1)}{q+1}}$-dimensional space, having redundancy $\smash{(q+1)(1+\frac1{q^2})}$ and density $\smash{\frac{q+1}{q^3+1}}$.  These ETFs can never be real: the requisite Hadamard matrices are of size $q^2+1$ which is never divisible by $4$ since $0$ and $1$ are the only squares in $\bbZ_4$.

\subsubsection{Denniston designs: $(2,2^r,2^{r+s}+2^r-2^s)$-Steiner ETFs for any $2\leq r<s$.}

For any $2\leq r<s$, one can show that there exists a $(2,k,v)$-Steiner system with $k=2^r$ and $v=2^{r+s}+2^r-2^s$~\cite{ColbournM07}.  By manipulating $r$ and $s$, one can independently determine the order of magnitude of one's redundancy and size, respectively: the corresponding ETFs consist of $(2^s+2)(2^{r+s}+2^r-2^s)$ vectors in $\smash{\frac{(2^s+1)(2^{r+s}+2^r-2^s)}{2^r}}$-dimensional space, having redundancy $\smash{2^r\frac{2^s+2}{2^s+1}}$ and density $\smash{\frac{2^r}{2^{r+s}+2^r-2^s}}$.  As such, this family has some qualitative similarities to the familes of ETFs produced by affine and projective geometries.  However, unlike those families, the ETFs produced by Denniston designs can never be real: the requisite Hadamard matrices are of size $2^s+2$, which is never divisible by $4$.

\begin{table}
\begin{center}
\begin{scriptsize}
\begin{tabular}{llllll}
\hline
Name		&$M$										&$N$											&Redundancy						&Real?					&Restrictions\\
\hline
$2$-blocks	&$\frac{v(v-1)}{2}$							&$v^2$											&$2\frac{v}{v-1}$				&$v$					&None\medskip\\
$3$-blocks	&$\frac{v(v-1)}6$							&$\frac{v(v+1)}{2}$								&$3\frac{v+1}{v-1}$				&$\frac{v+1}{2}$		&$v\equiv 1,3\mod 6$\medskip\\
$4$-blocks	&$\frac{v(v-1)}{12}$						&$\frac{v(v+2)}{3}$								&$4\frac{v+2}{v-1}$				&Never					&$v\equiv 1,4\mod 12$\medskip\\
$5$-blocks	&$\frac{v(v-1)}{20}$						&$\frac{v(v+3)}{4}$								&$5\frac{v+3}{v-1}$				&$\frac{v+3}4$			&$v\equiv 1,5\mod 20$\medskip\\
Affine		&$q^{n-1}(\frac{q^n-1}{q-1})$				&$q^n(1+\frac{q^n-1}{q-1})$						&$q(1+\frac{q-1}{q^n-1})$		&$1+\frac{q^n-1}{q-1}$	&$q$ a prime power, $n\geq2$\medskip\\
Projective	&$\frac{(q^n-1)(q^{n+1}-1)}{(q+1)(q-1)^2}$	&$\frac{q^{n+1}-1}{q-1}(1+\frac{q^n-1}{q-1})$	&$(q+1)(1+\frac{q-1}{q^n-1})$	&$1+\frac{q^n-1}{q-1}$	&$q$ a prime power, $n\geq2$\medskip\\
Unitals		&$\frac{q^2(q^3+1)}{q+1}$					&$(q^2+1)(q^3+1)$								&$(q+1)(1+\frac1{q^2})$			&Never					&$q$ a prime power\medskip\\
Denniston	&$\frac{(2^s+1)(2^{r+s}+2^r-2^s)}{2^r}$		&$(2^s+2)(2^{r+s}+2^r-2^s)$						&$2^r\frac{2^s+2}{2^s+1}$		&Never					&$2\leq r<s$\\
\hline
\end{tabular}
\end{scriptsize}
\caption{\label{table.infinite families}Eight infinite families of Steiner ETFs, each arising from a corresponding known infinite family of $(2,k,v)$-Steiner designs.  Each family permits both $M$ and $N$ to grow very large, but only a few families---affine, projective and Denniston---give one the freedom to simultaneously control the proportion between $M$ and $N$, namely the redundancy $\frac NM$ of the ETF.  The column denoted ``Real?" indicates the size for which a real Hadamard matrix must exist in order for the resulting ETF to be real; it suffices to have this size be a power of $2$; if the Hadamard conjecture is true, it would suffice for this number to be divisible by $4$.}
\end{center}
\end{table}

\begin{table}
\begin{center}
\begin{tabular}{rrrrrcl}
\hline
$M$&$N$&$k$&$v$&$r$&$\bbR/\bbC$&Construction of the Steiner system\\
\hline
  6&  16& 2& 4& 3&$\bbR$&$2$-blocks of $v=4$; Affine with $q=2$, $n=2$\\
  7&  28& 3& 7& 3&$\bbR$&$3$-blocks of $v=7$; Projective with $q=2$, $n=2$\\
 28&  64& 2& 8& 7&$\bbR$&$2$-blocks of $v=8$; Affine with $q=2$, $n=3$\\
 35& 120& 3&15& 7&$\bbR$&$3$-blocks of $v=15$; Projective with $q=2$, $n=3$\\
 66& 144& 2&12&11&$\bbR$&$2$-blocks of $v=12$\\
 99& 540& 5&45&11&$\bbR$&$5$-blocks of $v=45$\\ 
\hline
  3&   9& 2& 3& 2&$\bbC$&$2$-blocks of $v=3$\\
 10&  25& 2& 5& 4&$\bbC$&$2$-blocks of $v=5$\\
 12&  45& 3& 9& 4&$\bbC$&$3$-blocks of $v=9$; Affine with $q=3$, $n=2$\\
 13&  65& 4&13& 4&$\bbC$&$4$-blocks of $v=13$; Projective with $q=3$, $n=2$\\
 15&  36& 2& 6& 5&$\bbC$&$2$-blocks of $v=6$\\
 20&  96& 4&16& 5&$\bbC$&$4$-blocks of $v=16$; Affine with $q=4$, $n=2$\\
 21&  49& 2& 7& 6&$\bbC$&$2$-blocks of $v=7$\\
 21& 126& 5&21& 5&$\bbC$&$5$-blocks of $v=21$; Projective with $q=4$, $n=2$\\
 26&  91& 3&13& 6&$\bbC$&$3$-blocks of $v=13$\\
 30& 175& 5&25& 6&$\bbC$&$5$-blocks of $v=25$; Affine with $q=5$, $n=2$\\
 31& 217& 6&31& 6&$\bbC$&Projective with $q=5$, $n=2$\\
 36&  81& 2& 9& 8&$\bbC$&$2$-blocks of $v=9$\\
 45& 100& 2&10& 9&$\bbC$&$2$-blocks of $v=10$\\
 50& 225& 4&25& 8&$\bbC$&$4$-blocks of $v=25$\\
 55& 121& 2&11&10&$\bbC$&$2$-blocks of $v=11$\\
 56& 441& 7&49& 8&$\bbC$&Affine with $q=7$, $n=2$\\
 57& 190& 3&19& 9&$\bbC$&$3$-blocks of $v=19$\\
 57& 513& 8&57& 8&$\bbC$&Projective with $q=7$, $n=2$\\
 63& 280& 4&28& 9&$\bbC$&Unital with $q=3$; Denniston with $r=2$, $s=3$\\
 70& 231& 3&21&10&$\bbC$&$3$-blocks of $v=21$\\
 72& 640& 8&64& 9&$\bbC$&Affine with $q=8$, $n=2$\\
 73& 730& 9&73& 9&$\bbC$&Projective with $q=8$, $n=2$\\
 78& 169& 2&13&12&$\bbC$&$2$-blocks of $v=13$\\
 82& 451& 5&41&19&$\bbC$&$5$-blocks of $v=41$\\
 90& 891& 9&81&10&$\bbC$&Affine with $q=9$, $n=2$\\
 91& 196& 2&14&13&$\bbC$&$2$-blocks of $v=14$\\
 91&1001&10&91&10&$\bbC$&Projective with $q=9$, $n=2$\\
100& 325& 3&25&12&$\bbC$&$3$-blocks of $v=25$\\ 
\hline
\end{tabular}
\caption{\label{table.low-dimensional examples}The ETFs of dimension 100 or less that can be constructed by applying Theorem~\ref{theorem.steiner etfs} to the eight infinite families of Steiner systems detailed in Section~\ref{section.examples of Steiner ETFs}.  That is, these ETFs represent the first few examples of the general constructions summarized in Table~\ref{table.infinite families}.  For each ETF, we give the dimension $M$ of the underlying space, the number of frame vectors $N$, as well as the number $k$ of elements that lie in any block of a $v$-element set in the corresponding $(2,k,v)$-Steiner system.  We further give the value $r$ of the number of blocks that contain a given point; by Theorem~\ref{theorem.necessary conditions}, $\abs{\ip{f_n}{f_{n'}}}=\frac1r$ measures the angle between any two frame elements.  We also indicate whether the given frame is real or complex, and the method(s) of constructing the corresponding Steiner system.
}
\end{center}
\end{table}

\subsection{Necessary and sufficient conditions on the existence of Steiner ETFs.}

$(2,k,v)$-Steiner systems have been actively studied for over a century, with many celebrated results.  Nevertheless, much about these systems is still unknown.  In this subsection, we discuss some known partial characterizations of the Steiner systems which lie outside of the eight families we have already discussed, as well as what these results tell us about the existence of certain ETFs.  To begin, recall that, for a given $k$ and $v$, if a $(2,k,v)$-Steiner system exists, then the number $r$ of blocks that contain a given point is necessarily $\smash{\frac{v-1}{k-1}}$, while the total number of blocks $b$ is $\smash{\frac{v(v-1)}{k(k-1)}}$.  As such, in order for a $(2,k,v)$-Steiner system to exist, it is necessary for $(k,v)$ to be \textit{admissible}, that is, to have the property that $\smash{\frac{v-1}{k-1}}$ and $\smash{\frac{v(v-1)}{k(k-1)}}$ are integers.

However, this property is not sufficient for existence: it is known that a $(2,6,16)$-Steiner system does not exist~\cite{AbelG07} despite the fact that $\smash{\frac{v-1}{k-1}}=3$ and $\smash{\frac{v(v-1)}{k(k-1)}}=8$.  In fact, letting $v$ be either $16$, $21$, $36$, or $46$ results in an admissible pair with $k=6$, despite the fact that none of the corresponding Steiner systems exist; there are twenty-nine additional values of $v$ which form an admissible pair with $k=6$ and for which the existence of a corresponding Steiner system remains an open problem~\cite{AbelG07}.  Similar nastiness arises with $k\geq 7$.  The good news is that admissibility, though not sufficient for existence, is, in fact, asymptotically sufficient: for any fixed $k$, there exists a corresponding admissible index $v_0(k)$ for which for all $v>v_0(k)$ such that $\smash{\frac{v-1}{k-1}}$ and $\smash{\frac{v(v-1)}{k(k-1)}}$ are integers, a $(2,k,v)$-Steiner system indeed exists~\cite{AbelG07}.  Moreover, explicit values of $v_0(k)$ are known for small $k$: $v_0(6)=801$, $v_0(7)=2605$, $v_0(8)=3753$, $v_0(9)=16497$.  We now detail the ramifications of these design-theoretic results on frame theory:
\begin{thm}
\label{theorem.necessary conditions}
If an $N$-element Steiner equiangular tight frame exists for an $M$-dimensional space, then letting $\smash{\alpha=(\frac{N-M}{M(N-1)})^{\frac12}}$, the corresponding block design has parameters:
\begin{equation*}
v=\tfrac{N\alpha}{1+\alpha},
\qquad
b=M,
\qquad
r=\tfrac1{\alpha},
\qquad
k=\tfrac{N}{M(1+\alpha)}.
\end{equation*}
In particular, if such a frame exists, then these expressions for $v$, $k$ and $r$ are necessarily integers.\medskip

\noindent
Conversely, for any fixed $k\geq2$, there exists an index $v_0(k)$ for which for all $v>v_0(k)$ such that $\smash{\frac{v-1}{k-1}}$ and $\smash{\frac{v(v-1)}{k(k-1)}}$ are integers, there exists a Steiner equiangular tight frame of $\smash{v(1+\frac{v-1}{k-1})}$ vectors for a space of dimension $\smash{\frac{v(v-1)}{k(k-1)}}$.\medskip

\noindent
In particular, for any fixed $k\geq2$, letting $v$ be either $jk(k-1)+1$ or $jk(k-1)+k$ for increasingly large values of $j$ results in a sequence of Steiner equiangular tight frames whose redundancy is asymptotically $k$; these frames can be real if there exist real Hadamard matrices of sizes $jk+1$ or $jk+2$, respectively.
\end{thm}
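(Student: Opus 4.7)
The plan is to dispatch the three assertions in the theorem in turn, using Theorem~\ref{theorem.steiner etfs} as a black box together with the design-theoretic facts cited in the paragraph immediately preceding the statement.

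For the first (necessary conditions) part, I would simply invert the parameter dictionary in Theorem~\ref{theorem.steiner etfs}. That theorem identifies, for any $(2,k,v)$-Steiner ETF, $M=\frac{v(v-1)}{k(k-1)}$ (which equals $b$, the number of blocks) and $N=v(1+\frac{v-1}{k-1})=v(1+r)$, and in fact the proof shows that the common magnitude of off-diagonal Gram entries is $\frac{k-1}{v-1}$. Since for any $M\times N$ ETF one has $\alpha^2=\frac{N-M}{M(N-1)}$, this forces $\alpha=\frac{k-1}{v-1}=\frac1r$, so $r=\frac1\alpha$. Substituting $r=\frac1\alpha$ into $N=v(1+r)$ gives $v=\frac{N\alpha}{1+\alpha}$, and using $\frac NM=k(1+\frac{k-1}{v-1})=k(1+\alpha)$ yields $k=\frac{N}{M(1+\alpha)}$. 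That $v$, $k$, $r$ (and $b=M$) are integers is automatic because they are counts in a combinatorial block design, so the displayed formulas simply unwrap the requirement that a Steiner system with those parameters exist.

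For the second (asymptotic sufficiency) part, the work is already done by the cited theorem on asymptotic existence of Steiner systems: for any fixed $k\geq 2$ there is a threshold $v_0(k)$ such that every admissible pair $(k,v)$ with $v>v_0(k)$ is actually realized by some $(2,k,v)$-Steiner system. Theorem~\ref{theorem.steiner etfs} then converts that Steiner system into the claimed ETF of $v(1+\frac{v-1}{k-1})$ vectors in dimension $\frac{v(v-1)}{k(k-1)}$. So this part is essentially a one-line appeal.

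For the third (concrete sequences) part, I would first verify admissibility of the two residue classes. Writing $v=jk(k-1)+1$ gives $\frac{v-1}{k-1}=jk$ and $\frac{v(v-1)}{k(k-1)}=(jk(k-1)+1)j$, both integers; writing $v=jk(k-1)+k$ gives $\frac{v-1}{k-1}=jk+1$ and $\frac{v(v-1)}{k(k-1)}=(j(k-1)+1)(jk+1)$, again both integers. Hence both families are admissible and, for $j$ large enough, produce Steiner ETFs by the previous part. The redundancy computed in Theorem~\ref{theorem.steiner etfs} is $\frac{N}{M}=k\bigl(1+\frac{k-1}{v-1}\bigr)$, which tends to $k$ as $v\to\infty$. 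The Hadamard size mandated by Theorem~\ref{theorem.steiner etfs} is $1+\frac{v-1}{k-1}=r+1$, which in the two cases equals $jk+1$ and $jk+2$ respectively, matching the statement.

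None of the steps presents a genuine technical obstacle: each is either a direct algebraic inversion, a citation of existing combinatorial machinery, or a routine admissibility check. The only thing that requires care is the bookkeeping in part one to confirm that the paper's $\alpha=\sqrt{(N-M)/(M(N-1))}$ really does coincide with $\frac{k-1}{v-1}$; once this is pinned down, the rest of the theorem assembles mechanically from Theorem~\ref{theorem.steiner etfs} and the Wilson-type asymptotic existence result.
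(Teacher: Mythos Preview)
Your proposal is correct and covers all three parts in essentially the same way the paper does: invert the dictionary from Theorem~\ref{theorem.steiner etfs}, cite the asymptotic existence theorem for Steiner systems, and then check admissibility and Hadamard sizes for the two explicit residue classes.

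The one noteworthy difference is in the first part. The paper eliminates $k$ via $N=v+kM$, substitutes into $N=v\bigl(1+\tfrac{v-1}{k-1}\bigr)$, and solves the resulting quadratic $(M-1)v^2+2(N-M)v-N(N-M)=0$ to extract $v=\tfrac{N\alpha}{1+\alpha}$. You instead invoke the identity $\alpha=\tfrac{k-1}{v-1}$ established in the proof of Theorem~\ref{theorem.steiner etfs}, which immediately gives $r=\tfrac1\alpha$ and then $v$ and $k$ by one-line substitutions. Your route is shorter and avoids the quadratic entirely; the paper's route is more self-contained in that it works purely from the formulas for $M$ and $N$ without reaching back into the earlier proof. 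Either way the algebra is routine, and the remaining two parts are handled identically.
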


\begin{proof}
To prove the necessary conditions on $M$ and $N$, recall that Steiner ETFs, namely those ETFs produced by Theorem~\ref{theorem.steiner etfs}, have $\smash{N=v(1+\frac{v-1}{k-1})}$ and $\smash{M=\frac{v(v-1)}{k(k-1)}}$.  Together, these two equations imply $N=v+kM$.  Solving for $k$, and substituting the resulting expression into $\smash{N=v(1+\frac{v-1}{k-1})}$ yields the quadratic equation $0=(M-1)v^2+2(N-M)v-N(N-M)$.  With some algebra, the only positive root of this equation can be found to be $\smash{v=\frac{N\alpha}{1+\alpha}}$, as claimed.  Substituting this expression for $v$ into $N=v+kM$ yields $\smash{k=\tfrac{N}{M(1+\alpha)}}$.  Having $v$ and $k$, the previously-mentioned relations $bk=vr$ and $v-1=r(k-1)$ imply $\smash{r=\frac{v-1}{k-1}=\frac1\alpha}$ and $\smash{b=\frac vkr=M}$, as claimed. 

The second set of conclusions is the result of applying Theorem~\ref{theorem.steiner etfs} to the aforementioned $(2,k,v)$-Steiner ETFs that are guaranteed to exist for all sufficiently large $v$, provided $\smash{\frac{v-1}{k-1}}$ and $\smash{\frac{v(v-1)}{k(k-1)}}$ are integers.  The final set of conclusions are then obtained by applying this fact in the special cases where $v$ is either $jk(k-1)+1$ or $jk(k-1)+k$.  In particular, if $v=jk(k-1)+1$ then $\smash{\frac{v-1}{k-1}=jk}$ and $M=\smash{\frac{v(v-1)}{k(k-1)}=j[jk(k-1)+1]}$ are integers, and the resulting ETF of $(jk+1)[jk(k-1)+1]$ vectors has a redundancy of $\smash{k+\frac1j}$ that tends to $k$ for large $j$; such an ETF can be real if there exists a real Hadamard matrix of size $jk+1$.  Meanwhile, if $v=jk(k-1)+k$ then $\smash{\frac{v-1}{k-1}=jk+1}$ and $M=\smash{\frac{v(v-1)}{k(k-1)}=(jk+1)[j(k-1)+1]}$ are integers, and the resulting ETF of $k(jk+2)[j(k-1)+1]$ vectors has a redundancy of $\smash{k\frac{jk+2}{jk+1}}$ that tends to $k$ for large $j$; such an ETF can be real if there exists a real Hadamard matrix of size $jk+2$.
\end{proof}
We conclude this section with a few thoughts on Theorems~\ref{theorem.steiner etfs} and~\ref{theorem.necessary conditions}.  First, we emphasize that the method of Theorem~\ref{theorem.steiner etfs} is a method for constructing some ETFs, and by no means constructs them all.  Indeed, as noted above, the redundancy of Steiner ETFs is always strictly greater than $2$; while some of those ETFs with $\frac NM<2$ will be the Naimark complements of Steiner ETFs, one must admit that the Steiner method contributes little towards the understanding of those ETFs with $\frac NM=2$, such as those arising from Paley graphs~\cite{Waldron09}.  Moreover, Theorem~\ref{theorem.necessary conditions} implies that not even every ETF with $\frac NM>2$ arises from a Steiner system: though there exists an ETF of $76$-elements in $\bbR^{19}$~\cite{Waldron09}, the corresponding parameters of the design would be $v=\frac{38}3$, $r=5$ and $k=\frac{10}3$, not all of which are integers.

That said, the method of Theorem~\ref{theorem.steiner etfs} is truly significant: comparing Table~\ref{table.low-dimensional examples} with a comprehensive list of all real ETFs of dimension $50$ or less~\cite{Waldron09}, we see the Steiner method produces $4$ of the $17$ ETFs that have redundancy greater than $2$, namely $6\times 16$, $7\times 28$, $28\times 64$ and $35\times 120$ ETFs.  Interestingly, an additional $4$ of these $17$ ETFs can also be produced by the Steiner method, but only in complex form, namely those of $15\times 36$, $20\times 96$, $21\times 126$ and $45\times 100$ dimensions; it is unknown whether this is the result of a deficit in our analysis or the true non-existence of real-valued Steiner-based constructions of these sizes.  The plot further thickens when one realizes that an additional $2$ of these $17$ real ETFs satisfy the necessary conditions of Theorem~\ref{theorem.necessary conditions}, but that the corresponding $(2,k,v)$-Steiner systems are known to not exist: if a $28\times 288$ ETF was to arise as a result of Theorem~\ref{theorem.steiner etfs}, the corresponding Steiner system would have $k=6$ and $v=36$, while the $43\times 344$ ETF would have $k=7$ and $v=43$; in fact, $(2,6,36)$- and  $(2,7,43)$-Steiner systems cannot exist~\cite{AbelG07}.  With our limited knowledge of the rich literature on Steiner systems, we were unable to resolve the existence of two remaining candidates: $23\times 276$ and $46\times 736$ ETFs could potentially arise from $(2,10,46)$- and $(2,14,92)$-Steiner systems, respectively, provided they exist.

\section{Equiangular tight frames and the restricted isometry property}

In the previous section, we used Theorem~\ref{theorem.steiner etfs} to construct many examples of Steiner ETFs.  In this section, we investigate the feasibility of using such frames for compressed sensing applications.  Here, we identify a frame $F=\set{f_n}_{n=1}^{N}$ in $\bbH_M$ with its \textit{synthesis} operator $F:\bbC^N\rightarrow\bbH_M$, $Fg:=\sum_{n=1}^{N}g(n)f_n$.  That is, $F$ is an $M\times N$ matrix whose columns are the $f_n$'s.  For a given $\delta$ and $K$, such an operator $F$ is said to have the \textit{$(K,\delta)$-restricted isometry property} ($(K,\delta)$-RIP) if:
\begin{equation}
\label{equation.definition of RIP}
(1-\delta)\|g\|_2^2\leq\|Fg\|_2^2\leq(1+\delta)\|g\|_2^2
\end{equation}
for all $g\in\bbC^N$ that are \textit{$K$-sparse}, that is, for which $g(n)\neq0$ for at most $K$ values of $n$.  The central problem of compressed sensing is to efficiently solve the underdetermined linear system $Fg=f$ for $g$, given that $f$ itself arises as $f=Fg_0$ where $g_0$ is $K$-sparse.  Here, the true challenge is that, despite the fact that $f$ is a linear combination of at most $K$ of the $f_n$'s, one does not know \textit{a priori} which particular $K$ vectors were employed.  Moreover, since the values for $N$ and $K$ encountered in applications are typically very large, it is not computationally feasible to check every $K$-subset of $\set{f_n}_{n=1}^N$.  It is therefore a remarkable fact~\cite{CandesT05} that $g=g_0$ can indeed be efficiently recovered from the system $Fg=f$ using linear programming, provided the operator $F$ is $(2K,\delta)$-RIP for some $\delta<\sqrt{2}-1$.

Since RIP is such an exceedingly nice property, it is natural to ask whether such matrices even exist---they do.  In fact, \cite{CandesT06} used concentration-of-measure arguments to show that for every $\delta>0$, there exists a constant $C$ such that $M\times N$ matrices of Gaussian or Bernoulli ($\pm1$) entries are $(K,\delta)$-RIP with high probability provided $M\geq CK\log(N/M)$.  Similarly, matrices formed by taking random rows of a Fourier matrix satisfy RIP with high probability when $M\geq CK\log^4(N)$~\cite{RudelsonV08}.  These existence results have spurred a great deal of interest in deterministic RIP matrix constructions that have $M=\rmO(K\log^\beta N)$ for some $\beta\geq1$, but no such constructions are known to date.  Instead, the best known deterministic constructions, such as the one given in~\cite{DeVore07}, have $M=\rmO(K^2)$.  Despite the fact that all ETFs do indeed match this state-of-the-art level of performance, sadly some ETFs---Steiner ETFs in particular---fail to do any better.

To clarify, let $\calK$ be any $K$-element subsequence of $\set{1,\dotsc,N}$, and let $F_\calK:=\set{f_n}_{n\in\calK}$ be the corresponding $M\times K$ submatrix of $F$.  Using a standard argument, one may show that the $(K,\delta)$-RIP condition~\eqref{equation.definition of RIP} is equivalent to having the spectrum of each sub-Gramian $F_\calK^*F_\calK^{}$ lie in the interval $[1-\delta,1+\delta]$; in frame parlance, this implies that each $F_\calK$ is a good \textit{Riesz basis}.  Letting $\rho(A)$ denote the spectral radius of a given $K\times K$ matrix $A$, the $(K,\delta)$-RIP condition is equivalent to having $\rho(F_\calK^*F_\calK^{}-\rmI)\leq\delta$ for all $\calK$.  The problem of constructing RIP matrices thus reduces to one of spectral estimation.  At this point, most research on constructing RIP matrices falls back on a simple, but effective tool: \textit{Gershgorin circles}, namely the fact that every eigenvalue of $A$ lies in one of the disks in the complex plane centered at $a_{k,k}$ and having radius $\sum_{k'\neq k}\abs{a_{k,k'}}$.  When the $f_n$'s have unit norm, as in the case of ETFs, the diagonal entries of the sub-Gramian $F_\calK^*F_\calK^{}$ are all one, and so the application of Gershgorin's estimate to $A=F_\calK^*F_\calK^{}-\rmI$ reduces to the so-called \textit{worst-case-coherence} bound:
\begin{equation*}
\max_{\calK}\rho(F_\calK^*F_\calK^{}-\rmI)
=\max_{\calK}\max_{n\in\calK}\sum_{\substack{n'\in K\\n'\neq n}}\abs{\ip{f_n}{f_{n'}}}
\leq(K-1)\max_{n\neq n'}\abs{\ip{f_n}{f_{n'}}}.
\end{equation*}
In particular, in order for $F=\set{f_n}_{n=1}^N$ to be $(K,\delta)$-RIP, it suffices to have:
\begin{equation}
\label{equation.RIP sufficient coherence condition}
(K-1)\max_{n\neq n'}\abs{\ip{f_n}{f_{n'}}}\leq\delta.
\end{equation}
Further note that when using \eqref{equation.RIP sufficient coherence condition} to demonstrate RIP for any fixed $\delta<1$, the largest possible values of $K$ occur when $\abs{\ip{f_n}{f_{n'}}}$ achieves its lower \textit{Welch bound} $\smash{(\frac{N-M}{M(N-1)})^{\frac12}}$, which occurs precisely when $F$ is an ETF~\cite{StrohmerH03}.  We summarize the preceding discussion as follows:
\begin{thm}
\label{theorem.RIPETF coherence}
For any fixed $\delta<1$, an equiangular tight frame $F=\set{f_n}_{n=1}^{N}$ in $\bbH_M$ has the $(K,\delta)$-restricted isometry property~\eqref{equation.definition of RIP} for all $K\leq 1+\delta(\tfrac{M(N-1)}{N-M})^{\frac12}$.\medskip
 
\noindent
Moreover, for any unit norm $f_n$'s, no argument that relies on the worst-case-coherence-based bound~\eqref{equation.RIP sufficient coherence condition} can provide a better range for such $K$.
\end{thm}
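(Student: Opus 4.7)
The plan is straightforward: both halves of the theorem are essentially corollaries of the discussion immediately preceding the statement, combined with the Welch bound. I would organize the argument as a short two-step verification rather than as a long derivation.

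For the first half, I would start from the coherence-based sufficient condition~\eqref{equation.RIP sufficient coherence condition} already derived via Gershgorin, namely that $(K-1)\max_{n\neq n'}\abs{\ip{f_n}{f_{n'}}}\leq\delta$ implies $(K,\delta)$-RIP. I would then plug in the defining feature of an ETF: every off-diagonal inner product has modulus exactly equal to the Welch bound $\alpha=\bigparen{\tfrac{N-M}{M(N-1)}}^{1/2}$, as recalled at the start of Section~2. The sufficient condition becomes $(K-1)\alpha\leq\delta$, which rearranges to $K\leq 1+\delta/\alpha=1+\delta\bigparen{\tfrac{M(N-1)}{N-M}}^{1/2}$. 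This is a one-line computation once the two ingredients are in hand.

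For the second half, I would prove optimality by showing that any unit norm frame fed into the bound~\eqref{equation.RIP sufficient coherence condition} yields a range of admissible $K$ no larger than the one above. The reason is the Welch lower bound: for any unit norm $\set{f_n}_{n=1}^N$ in $\bbH_M$, one has $\max_{n\neq n'}\abs{\ip{f_n}{f_{n'}}}\geq\bigparen{\tfrac{N-M}{M(N-1)}}^{1/2}$, with equality iff $F$ is an ETF. Thus, for any such frame, the condition $(K-1)\max_{n\neq n'}\abs{\ip{f_n}{f_{n'}}}\leq\delta$ forces $K\leq 1+\delta\bigparen{\tfrac{M(N-1)}{N-M}}^{1/2}$; no larger $K$ can be certified via~\eqref{equation.RIP sufficient coherence condition}, and that ceiling is attained precisely by ETFs.

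There is no real obstacle here; the only mildly delicate point is making clear in the wording of the second half that ``better range'' refers to the admissible $K$ given by applying~\eqref{equation.RIP sufficient coherence condition} as a black box, rather than by invoking finer spectral information about $F_\calK^*F_\calK^{}$. With that clarification the proof is essentially arithmetic, and I would keep it to a single paragraph.
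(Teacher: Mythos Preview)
Your proposal is correct and mirrors the paper's own argument: the theorem is explicitly presented as a summary of the preceding discussion, which derives~\eqref{equation.RIP sufficient coherence condition} via Gershgorin and then observes that the Welch bound---attained precisely by ETFs---makes $(K-1)\alpha\leq\delta$ the tightest possible instance of that sufficient condition. There is no additional content in the paper beyond what you have outlined.
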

Note that when $N\geq 2M$, we have $\smash{1\leq\frac{N-1}{N-M}\leq2}$ and therefore the maximum permissible value of $K$ is on the order of $M^\frac{1}{2}$, which is consistent with other known deterministic constructions of RIP matrices.  This is not to say that ETFs, in general, cannot be RIP with $M=\rmO(K\log^\beta N)$ for some $\beta\geq1$, but rather, that such a fact cannot be obtained using the worst-case-coherence-based bound~\eqref{equation.RIP sufficient coherence condition}.  

This hope notwithstanding, one of the sad consequences of the Steiner construction method of Theorem~\ref{theorem.steiner etfs} is that we, for the first time, know that there is a large class of ETFs for which the seemingly coarse estimate~\eqref{equation.RIP sufficient coherence condition} is, in fact, accurate.  In particular, recall from Theorem~\ref{theorem.steiner etfs} that every Steiner ETF is built from carefully overlapping $v$ regular simplices, each consisting of $r+1$ vectors in a $r$-dimensional subspace of $b$-dimensional space.  In particular, letting $K=r+1$, the corresponding subcollection of all $K$ vectors that lie in a given block are linearly dependent, which, in accordance with~\eqref{equation.definition of RIP}, forces the corresponding $\delta$ to be at least $1$.  Recalling the value of $r$ given in Theorem~\ref{theorem.necessary conditions}, we see that Steiner ETFs cannot be $(K,\delta)$-RIP for any $\delta<1$ so long as $K$ is at least $1+(\tfrac{M(N-1)}{N-M})^{\frac12}$.  That is, for Steiner ETFs, the best one can truly do is, in fact, given by Theorem~\ref{theorem.RIPETF coherence}.  This begs the open question: are there any ETFs which are RIP with $M=\rmO(K\log^\beta N)$, or does optimizing a coarse bound---worst-case-coherence---always come at the cost of being able to realize a truly small spectral radius?

\section*{Acknowledgments}
The authors thank Prof.~Peter G.~Casazza for his insightful suggestions.  Fickus was supported by NSF DMS 1042701, NSF CCF 1017278,  AFOSR F1ATA00083G004 and AFOSR F1ATA00183G003.  Mixon was supported by the A.B. Krongard Fellowship.  Tremain was supported by NSF DMS 1042701.  The views expressed in this article are those of the authors and do not reflect the official policy or position of the United States Air Force, Department of Defense, or the U.S. Government.

\section*{References}


\begin{thebibliography}{WW}

\bibitem{AbelG07}
R.~J.~R.~Abel, M.~Greig,
BIBDs with small block size,
in: C.J.~Colbourn, J.H.~Dinitz (Eds.), Handbook of Combinatorial Designs (2007) 72--79.

\bibitem{Appleby05}
D.M.~Appleby,
Symmetric informationally complete-positive operator valued measures and the extended Clifford group,
J.~Math.~Phys.~46 (2005) 052107/1--29.

\bibitem{BodmannE10} 
B.G.~Bodmann, H.J.~Elwood,
Complex equiangular Parseval frames and Seidel matrices containing $p$th roots of unity,
Proc.~Amer.~Math.~Soc.~138 (2010) 4387--4404.

\bibitem{BodmannP05}
B.G.~Bodmann, V.I.~Paulsen,
Frames, graphs and erasures,
Linear Algebra Appl.~404 (2005) 118--146.

\bibitem{BodmannPT09}
B.G.~Bodmann, V.I.~Paulsen, M.~Tomforde,
Equiangular tight frames from complex {S}eidel matrices containing cube roots of unity,
Linear Algebra Appl.~430 (2009) 396--417.

\bibitem{Brouwer07}
A.E.~Brouwer,
Strongly regular graphs,
in: C.J.~Colbourn, J.H.~Dinitz (Eds.), Handbook of Combinatorial Designs (2007) 852--868.

\bibitem{CandesT05}
E.J.~Cand\`{e}s, T.~Tao,
Decoding by linear programming,
IEEE Trans.~Inf.~Theory 51 (2005) 4203--4215.

\bibitem{CandesT06}
E.J.~Cand\`{e}s, T.~Tao,
Near optimal signal recovery from random projections: Universal encoding strategies?,
IEEE Trans.~Inf.~Theory 52 (2006) 5406--5425.

\bibitem{CasazzaFMWZ10}
P.~G.~Casazza, M.~Fickus, D.~Mixon, Y.~Wang and Z.~Zhou,
Constructing tight fusion frames,
to appear in: Appl.~Comput.~Harmon.~Anal.

\bibitem{CasazzaHKK10}
P.G.~Casazza, A.~Heinecke, F.~Krahmer, G.~Kutyniok, 
Optimally sparse frames,
preprint.

\bibitem{CasazzaRT08}
P.G.~Casazza, D.~Redmond, J.C.~Tremain,
Real equiangular frames,
in: Proc.~Conf.~Inf.~Sci.~Syst.~(2008) 715--720.

\bibitem{ColbournM07}
C.J.~Colbourn, R.~Mathon,
Steiner systems,
in: C.J.~Colbourn, J.H.~Dinitz (Eds.), Handbook of Combinatorial Designs (2007) 102--110. 

\bibitem{DeVore07}
R.A.~DeVore,
Deterministic constructions of compressed sensing matrices,
J.~Complexity 23 (2007) 918--925.

\bibitem{DuncanHS10}
D.M.~Duncan, T.R.~Hoffman, J.P.~Solazzo,
Equiangular tight frames and fourth root seidel matrices,
Linear Algebra Appl.~432 (2010) 2816--2823.

\bibitem{Fickus09}
M.~Fickus,
Maximally equiangular frames and {G}auss sums,
J.~Fourier Anal.~Appl.~15 (2009) 413--427.

\bibitem{HolmesP04}
R.B.~Holmes, V.I.~Paulsen,
Optimal frames for erasures,
Linear Algebra Appl.~377 (2004) 31--51.

\bibitem{Kalra06}
D.~Kalra,
Complex equiangular cyclic frames and erasures,
Linear Algebra Appl.~419 (2006) 373--399.

\bibitem{Khatirinejad08}
M.~Khatirinejad,
On Weyl-Heisenberg orbits of equiangular lines,
J.~Algebr.~Comb.~28 (2008) 333--349.

\bibitem{LemmensS73}
P.W.H.~Lemmens, J.J.~Seidel,
Equiangular lines,
J.~Algebra 24 (1973) 494--512.

\bibitem{Renes07}
J.M.~Renes,
Equiangular tight frames from Paley tournaments,
Linear Algebra Appl.~426 (2007) 497--501.

\bibitem{RenesBSC04}
J.M.~Renes, R.~Blume-Kohout, A.J.~Scott, C.M.~Caves,
Symmetric informationally complete quantum measurements,
J.~Math.~Phys.~45 (2004) 2171--2180.

\bibitem{RudelsonV08}
M.~Rudelson, R.~Vershynin,
On sparse reconstruction from Fourier and Gaussian measurements,
Commun.~Pure Appl.~Anal.~61 (2008) 1025--1045.

\bibitem{ScottG10}
A.J.~Scott, M.~Grassl,
Symmetric informationally complete positive-operator valued measures: A new computer study,
J.~Math.~Phys.~51 (2010) 042203/1--15.

\bibitem{Seidel73}
J.J.~Seidel,
A survey of two-graphs,
in: Proc. Intern. Coll. Teorie Combinatorie (1973), 481--511.

\bibitem{Singh10}
P.~Singh,
Equiangular tight frames and signature sets in groups,
to appear in: Linear Algebra Appl.

\bibitem{Strohmer08}
T.~Strohmer,
A note on equiangular tight frames,
Linear Algebra Appl.~429 (2008) 326--330.

\bibitem{StrohmerH03}
T.~Strohmer, R.W. Heath,
Grassmannian frames with applications to coding and communication,
Appl.~Comput.~Harmon.~Anal.~14 (2003) 257--275.

\bibitem{SustikTDH07}
M.A.~Sustik, J.A.~Tropp, I.S.~Dhillon, R.W.~Heath,
On the existence of equiangular tight frames,
Linear Algebra Appl.~426 (2007) 619--635.

\bibitem{TroppDHS05}
J.A.~Tropp, I.S.~Dhillon, R.W.~Heath, T.~Strohmer,
Designing structured tight frames via an alternating projection method,
IEEE Trans.~Inform.~Theory 51 (2005) 188--209.

\bibitem{Waldron09}
S.~Waldron,
On the construction of equiangular frames from graphs,
Linear Algebra Appl.~431 (2009) 2228--2242.

\bibitem{XiaZG05}
P.~Xia, S.~Zhou, G.B.~Giannakis,
Achieving the Welch bound with difference sets,
IEEE Trans.~Inform.~Theory 51 (2005) 1900--1907.
\end{thebibliography}
\end{document}